\documentclass[12pt,a4paper]{article}
\usepackage[top=3.05cm, bottom=3.05cm, left=3cm, right=3cm]{geometry}
\usepackage[latin1]{inputenc}
\usepackage{csquotes}
\usepackage{amsmath}
\usepackage{amsthm}
\usepackage{amsfonts}
\usepackage{amssymb}
\usepackage{graphics}
\usepackage{float}
\usepackage[hang,flushmargin]{footmisc} 

\usepackage{amsmath,amsthm,amssymb,graphicx, multicol, array}
\usepackage{enumerate}
\usepackage{enumitem}
\usepackage{xcolor}

\usepackage[pagebackref,bookmarks,colorlinks,breaklinks]{hyperref}

\hypersetup{linkcolor=blue,citecolor=red,filecolor=blue,urlcolor=blue} 



\parindent 0pt

\usepackage{tabto}

\numberwithin{equation}{section}


\usepackage{tikz}
\usepackage{pgfplots}

\usetikzlibrary{matrix}

\usepackage{mathrsfs}


\DeclareMathOperator{\ord}{ord}

\newtheorem{thm}{Theorem}[section]
\newtheorem{lem}{Lemma}[section]
\newtheorem{theorem}{Theorem}[section]
\newtheorem{lemma}{Lemma}[section]

\newtheorem{exa}{Example}[section]

\newtheorem{cor}{Corollary}[section]

\newtheorem{exe}{Exercise}[section]

\newcommand{\Q}{\mathbb{Q}}

\newcommand{\F}{\mathbb{F}}

\usepackage{fancyhdr}
\pagestyle{fancy}
\lhead{\textsc{Small Prime $k$th Power Residues and Nonresidues}}
\chead{}
\rhead{\thepage}
\lfoot{}
\cfoot{}
\rfoot{}
\cfoot{}

\usepackage{titletoc}

\titlecontents{section}
[0pt]                                               
{}%
{\contentsmargin{0pt}                               
	\thecontentslabel\enspace%
	\normalsize}
{\contentsmargin{0pt}\normalsize}                        
{\titlerule*[.5pc]{.}\contentspage}                 
[] 
\setcounter{tocdepth}{1}     


\title{Small Prime $k$th Power Residues and Nonresidues in Arithmetic Progressions}
\date{}
\author{N. A. Carella}

\begin{document}
\thispagestyle{empty}
\date{}

\maketitle
\begin{abstract}
Let $p$ be a large odd prime, let $x=\log p)(\log\log p)^{3+\varepsilon}$ and let $q\ll\log\log p$ be an integer, where $\varepsilon>0$ is a small number.  
This note proves the existence of small prime quadratic residues and small prime quadratic nonresidues in the arithmetic progression $a+qm\ll x$, with relatively prime $1\leq a<q$, unconditionally. The same results are generalized to small prime $k$th power residues and nonresidues, where $k\mid p-1$ and $k\ll\log\log p$. \let\thefootnote\relax\footnote{\today  \\
	\textit{MSC2020}: Primary 11A15; Secondary 11L40, 11E25, 11R29. \\
	\textit{Keywords}: Small quadratic residue; Least quadratic nonresidue; $k$th power nonresidue; Arithmetic progression; Burgess bound; Deterministic algorithm.}
\end{abstract}

\tableofcontents

\section{Introduction}\label{S9982QN}\hypertarget{S9982QN}
For an odd prime $p$ and a pair of integers $k\geq2$ and $n\ne0,1$, the congruence equation $x^k-n\equiv 0 \bmod p$ is solvable if and only if the integer $n$ is a \textit{kth power residue}. Otherwise $n$ is a \textit{kth power nonresidue}. The literature for $k=2$, better known as quadratic residues and quadratic nonresidue, is vast, see \cite{MT2021} for a survey. However, very few results on quadratic residues and quadratic nonresidue in arithmetic progressions are known, see \cite{GA2006}, \cite{WS2013}, \cite{PP2018} for some details. Likewise, the literature for $k>2$ is scarce, there are very few papers, see \cite{HR1974}. \\

The most recent results in \cite{GA2006} confirm the existence of prime quadratic residues $n=4m+1<p$ and $n=4m+3<p$ modulo $p>37$ but gives no details on the size of these quadratic residues. The result below claims the existence of small prime quadratic residues $n=a+qm$ for any arithmetic progression with small relatively prime parameters $a$, $q$ and large $p$. 

\begin{theorem}\label{thm9982QR.100} \hypertarget{thm9982QR.100} Let $\varepsilon>0$ be a small real number. Let \(p\geq 2\) be a large prime, let $ x=(\log p)(\log\log p)^{3+\varepsilon}$ and let $q=(\log\log p)$. If $1 \leq a <q$ is a pair of relatively prime integers, then there exists a prime quadratic residue in the arithmetic progression
	\begin{equation}\label{eq9982QR.100}
		n=a+qm\ll x.   
	\end{equation}
\end{theorem}
Similarly, the results in \cite{PP2018} confirm the existence of prime quadratic nonresidues $n=4m+1<p$ and $n=4m+3<p$ modulo $p$ but gives no details on the size of these quadratic residues. Further, an earlier result in {\color{red}\cite[Theorem 1]{HR1974}} specifies the upper bound $n\ll p^{2/5}q^{5/2}$. The result below claims the existence of small quadratic nonresidues $n=a+qm$ for any arithmetic progression with small relatively prime parameters $a$, $q$ and large $p$.

\begin{theorem}\label{thm9982QN.100} \hypertarget{thm9982QN.100} Let $\varepsilon>0$ be a small real number. Let \(p\geq 2\) be a large prime, let $ x=(\log p)(\log\log p)^{3+\varepsilon}$ and let $q=(\log\log p)$. If $1 \leq a <q$ is a pair of relatively prime integers, then there exists a prime quadratic nonresidue in the arithmetic progression
	\begin{equation}\label{eq9982QN.100N}
		n=a+qm\ll x.   
	\end{equation}
\end{theorem}
Larger moduli $q=(\log\log p)^c$, where $c\geq0$ is an arbitrary constant, are possible. However, these are restricted by the limitation of the Siegal-Walfizs theorem and the value $x$ increases by a corresponding amount. Furthermore, exponentially large moduli $q\approx(\log p)^{1/2}$ conditional on the GRH or the Mongomerry conjecture, see {\color{red}\cite[Conjecture 13.9]{MV2007}}, and  increasing the value $x$ by a corresponding amount, are also possible, but the value $x$ increases by a corresponding exponential amount.\\

The results in {\color{red}\cite[Theorem 1]{HR1974}} claims that the least $k$th power nonresidue $n(p,q,a)=a+mq$ in arithmetic progressions satisfies $n(p,q,a)\ll p^{2/5}q^{5/2}$. A significant sharpening of this estimate is provided here, for both $k$th power residues and nonresidues. 

\begin{theorem}\label{thm9982KR.100} \hypertarget{thm9982KR.100} Let $\varepsilon>0$ be a small real number. Let \(p\geq 2\) be a large prime, let $ x=(\log p)(\log\log p)^{4+\varepsilon}$ and let $q\ll(\log\log p)$ be an integer. If $k\mid p-1$ is a small integer such that $k<x$ and $1 \leq a <q$ are relatively prime integers, then there exists a prime $k$th power residue in the arithmetic progression
		\begin{equation}\label{eq9982KR.100a}
			n=a+qm\ll x.\nonumber   
		\end{equation}

	In addition, the predicted number of small prime $k$th residues or nonresidues is 
	\begin{equation}\label{eq9982KR.100CD}
		N_k(x,q,a)=\delta(k,q,a)\cdot (\log p)(\log\log p)^{3+\varepsilon}+Error ,
	\end{equation}
	where $\delta(k,q,a)=c(k,q,a)/(k\varphi(q))\geq0$ is the density.
\end{theorem}
\begin{theorem}\label{thm9982KN.100} \hypertarget{thm9982KN.100} Let $\varepsilon>0$ be a small real number. Let \(p\geq 2\) be a large prime, let $ x=(\log p)(\log\log p)^{4+\varepsilon}$ and let $q\ll(\log\log p)$ be an integer. If $k\mid p-1$ is a small integer such that $k<x$ and $1 \leq a <q$ are relatively prime integers, then there exists a prime $k$th power nonresidue in the arithmetic progression
\begin{equation}\label{eq9982KN.100b}
	n=a+qm\ll x.\nonumber   
\end{equation}	
In addition, the predicted number of small prime $k$th residues or nonresidues is 
\begin{equation}\label{eq9982KN.100CD}
	N_k(x,q,a)=\delta(k,q,a)\cdot (\log p)(\log\log p)^{3+\varepsilon}+Error ,
\end{equation}
where $\delta(k,q,a)=c(k,q,a)/(k\varphi(q))\geq0$ is the density.
\end{theorem}
The density is defined by

\begin{equation}\label{eq9982KR.100CDK}
\delta(k,q,a)=\lim_{p\to\infty} \, \frac{\#\{a+qm=n < p: n\text{ is a }k\text{th prime quadratic residue}\}}{ p}.
\end{equation}
It includes the correction factor $c(k,q,a)$ which accounts for the  statistical dependencies between the primes $k$th power nonresidues, depending on $k\geq2$, $q\geq2$ and $a\geq1$. \\

The form of the counting function \eqref{eq9982KR.100CD} for $k\geq2$ suggests that the properties of being a prime, a $k$th power nonresidue and in an arithmetic progression are statistically independent events if and only if $c(k,q,a)=1$. On the other extreme, the counting function \eqref{eq9982KR.100CD} is finite if and only if $c(k,q,a)=0$, for example this event can occur whenever $\gcd(a,q)\ne1$. \\

The analysis is based on new characteristic functions for quadratic residues and quadratic nonresidues in finite fields introduced in \hyperlink{S9911Q}{Section} \ref{S9911Q}. 
Except for small changes the proof of \hyperlink{thm9982QR.100}{Theorem} \ref{thm9982QR.100}  is the same as the proof of \hyperlink{thm9982QN.100}{Theorem} \ref{thm9982QN.100}. The proof of the later appears in  \hyperlink{S9925Q}{Section} \ref{S9925Q}. Likewise, except for small changes the proof of \hyperlink{thm9982KR.100}{Theorem} \ref{thm9982KR.100} is the same as the proof of \hyperlink{thm9982KN.100}{Theorem} \ref{thm9982KN.100}. The proof of the later appears in  \hyperlink{S9925K}{Section} \ref{S9925K}. The last two sections cover other supporting results.

\section{Characteristic Functions of Quadratic  Residues}\label{S9911Q}\hypertarget{S9911Q}
For a prime $p$ the quadratic symbol modulo $p$ is defined by 
\begin{equation}\label{eq9911Q.100f}	\hypertarget{eq9911Q.100f}
	\left( \frac{a}{p}\right) 
	=\left \{
	\begin{array}{ll}
		1 & \text{ if } a \text{ is a quadratic residues},  \\
		-1 & \text{ if } a \text{ is a quadratic nonresidues}, \\
	\end{array} \right .\nonumber
\end{equation}
The classical characteristic functions of quadratic residues and quadratic nonresidues in the finite field $\F_p $ have the shapes
\begin{equation}\label{eq9911Q.100c-1}	\hypertarget{eq9911Q.100c-1}
	\varkappa _0(a)=\frac{1}{2}\left( 1+\left( \frac{a}{p}\right) \right) 
	=\left \{
	\begin{array}{ll}
		1 & \text{ if } a \text{ is a quadratic residues},  \\
		0 & \text{ if } a \text{ is a quadratic nonresidues}, \\
	\end{array} \right .\nonumber
\end{equation}
and 
\begin{equation}\label{eq9911Q.100c-2}	\hypertarget{eq9911Q.100c-2}
		\varkappa (a)=\frac{1}{2}\left( 1-\left( \frac{a}{p}\right) \right) 
		=\left \{
		\begin{array}{ll}
			1 & \text{ if } a \text{ is a quadratic nonresidues},  \\
			0 & \text{ if } a \text{ is a quadratic residues}, \\
		\end{array} \right .\nonumber
	\end{equation}
respectively. \\

The above indicator function was introduced over a century ago, an Euler criterion version of the indicator function is used in \cite{AN1896} to study consecutive pairs of quadratic residues and nonresidues and a modern version of the indicator function is used in \cite{JE1906} to study consecutive triples of quadratic residues and nonresidues. A new representation of the characteristic function for quadratic nonresidues in the finite field $\F_p $ is introduced here. 
\subsection{Characteristic Functions for Quadratic Residues}
New representations of the characteristic functions for quadratic residues and quadratic nonresidues in the finite field $\F_p $ are introduced here.  

\begin{lemma} \label{lem9911Q.200A} \hypertarget{lem9911Q.200A} Let \(p\geq 2\) be a prime and let \(\tau\) be a primitive root mod \(p\). If \(a\in
	\mathbb{F}_p\) is a nonzero element, then
\begin{enumerate}[font=\normalfont, label=(\roman*)]
\item$\displaystyle \varkappa_0 (a)=\sum _{0\leq n<p/2} \frac{1}{p}\sum _{0\leq s\leq p-1} e^ {i2\pi\frac{(\tau ^{2n}-a)s}{p}}
=\left \{
\begin{array}{ll}
	1 & \text{ if } a \text{ is a quadratic residues},  \\
	0 & \text{ if } a \text{ is a quadratic nonresidues}, \\
\end{array} \right .$ 
\item$\displaystyle \varkappa (a)=\sum _{0\leq n<p/2} \frac{1}{p}\sum _{0\leq s\leq p-1} e^ {i2\pi\frac{(\tau ^{2n+1}-a)s}{p}}
=\left \{
\begin{array}{ll}
	1 & \text{ if } a \text{ is a quadratic nonresidues},  \\
	0 & \text{ if } a \text{ is a quadratic residues}, \\
\end{array} \right .$ 
\end{enumerate}
\end{lemma}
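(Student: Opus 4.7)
The plan is to invoke orthogonality of additive characters modulo $p$ combined with the explicit enumeration of $\F_p^\times$ provided by a primitive root. Specifically, I would start from the standard identity
\begin{equation*}
\frac{1}{p}\sum_{s=0}^{p-1} e^{i2\pi bs/p}
=\begin{cases} 1 & \text{if } b \equiv 0 \pmod p,\\ 0 & \text{otherwise,}\end{cases}
\end{equation*}
and apply it with $b=\tau^{2n}-a$ in part (i), respectively $b=\tau^{2n+1}-a$ in part (ii), so that the inner Fourier sum collapses to the indicator of the congruence $\tau^{2n}\equiv a\pmod p$ (resp.\ $\tau^{2n+1}\equiv a\pmod p$).

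Next I would use the fact that since $\tau$ has multiplicative order $p-1$, the even powers $\{\tau^{2n}:0\le n\le (p-3)/2\}$ form a complete, non-repeating list of the $(p-1)/2$ quadratic residues in $\F_p^\times$, while the odd powers $\{\tau^{2n+1}:0\le n\le (p-3)/2\}$ form a complete, non-repeating list of the $(p-1)/2$ quadratic nonresidues. Consequently, if $a$ is a quadratic residue then exactly one index $n$ in the outer sum of (i) satisfies $\tau^{2n}\equiv a$, giving the value $1$; if $a$ is a quadratic nonresidue, no term contributes and the value is $0$. Part (ii) is immediate from the same argument with the roles of even and odd exponents interchanged.

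The only genuinely subtle point, and the place that needs the most care in writing, is the interpretation of the range ``$0\le n<p/2$''. For odd $p$ the integer values strictly below $p/2$ are $0,1,\ldots,(p-1)/2$, which is one value too many: the endpoint $n=(p-1)/2$ gives $\tau^{p-1}=1=\tau^0$, so the residue $a=1$ would be counted twice and we would obtain $\varkappa_0(1)=2$, contradicting the claimed characteristic property. I would therefore specify that the summation is intended over the $(p-1)/2$ indices $0\le n\le (p-3)/2$, which is the unique range turning $n\mapsto\tau^{2n}$ (resp.\ $n\mapsto\tau^{2n+1}$) into a bijection onto the set of quadratic residues (resp.\ nonresidues). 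Once this is fixed, the lemma reduces to a one-line application of orthogonality, and no analytic estimates are required.
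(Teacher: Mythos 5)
Your proof is correct and follows essentially the same route as the paper: the orthogonality of additive characters collapses the inner sum to the indicator of $\tau^{2n}\equiv a \pmod p$ (resp.\ $\tau^{2n+1}\equiv a \pmod p$), and the even (resp.\ odd) powers of the primitive root enumerate the quadratic residues (resp.\ nonresidues). Your point about the range is well taken and is in fact a defect the paper's proof glosses over: as written, $0\le n<p/2$ admits the $(p+1)/2$ indices $n=0,\ldots,(p-1)/2$, so $a=1$ in (i) and $a=\tau$ in (ii) are counted twice; restricting to the $(p-1)/2$ indices $0\le n\le (p-3)/2$, as you propose, is exactly the repair needed for the stated characteristic-function property to hold.
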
	
\begin{proof}[\textbf{Proof}] (ii) As the index \(n\geq 0\) ranges over the odd integers up to \(p/2\), the element \(\tau ^{2n+1}\in \mathbb{F}_p\) ranges over the quadratic nonresidues modulo $p$. Thus, the equation 
	\begin{equation}\label{eq9911Q.200c}
		\tau ^{2n+1}- a=0
	\end{equation} has a unique solution $n\in[1,p/2]$ if and only if the fixed element \(a\in \mathbb{F}_p\) is a quadratic nonresidue. In this case the inner sum in 
	\begin{equation}
		\sum _{0\leq n<p/2} \frac{1}{p}\sum _{0\leq s\leq p-1} e^ {i2\pi\frac{(\tau ^{2n+1}-a)s}{p}}
		=\left \{
		\begin{array}{ll}
			1 & \text{ if } a \text{ is a quadratic nonresidues},  \\
			0 & \text{ if } a \text{ is a quadratic residues}, \\
		\end{array} \right .
	\end{equation}	
	collapses to $\sum _{0\leq s\leq p-1}1=p$. This in turn identifies $a$ as a quadratic nonresidue. Otherwise, element \(a\in \mathbb{F}_p\) is not a quadratic nonresidue. Thus, the equation \eqref{eq9911Q.200c} has no solution and the inner sum collapses to $\sum _{0\leq s\leq p-1}e^ {i2\pi\frac{(\tau ^{2n+1}-a)s}{p}}=0$. This in turn identifies $a$ as a quadratic residue.
\end{proof}

\section{Characteristic Functions for $k$th Power Residues}
Let $p$ be a prime and let $k\geq2$ be an integer such that $k\mid p-1$. An element $a\in \F$ in a finite field is a $k$th power residue if and only if the congruence equation
\begin{equation} \label{eq9925K.200d}
	a^{(p-1)/k}\equiv1\bmod p 
\end{equation}	
is true. Otherwise the element is a $k$th power nonresidue.\\

New representations of the characteristic functions for $k$th power residues and $k$th power nonresidues in the finite field $\F_p $ are provided here.  

\begin{lemma} \label{lem9911K.200A} \hypertarget{lem9911K.200A} Let \(p\geq 2\) be a prime, let $k\mid p-1$, and let \(\tau\) be a primitive root mod \(p\). If \(a\in
	\mathbb{F}_p\) is a nonzero element, then
	\begin{enumerate}[font=\normalfont, label=(\roman*)]
		\item$\displaystyle \overline{\varkappa}_k (a)=\sum _{0\leq m<p/k} \frac{1}{p}\sum _{0\leq s\leq p-1} e^ {i2\pi\frac{(\tau ^{km}-a)s}{p}}
		=\left \{
		\begin{array}{ll}
			1 & \text{ if } a \text{ { is a $k$th power residues}},  \\
			0 & \text{ if } a \text{ is a $k$th power nonresidues}, \\
		\end{array} \right .$ 
		\item$\displaystyle \varkappa_k (a)=\sum _{0\leq m<p/k} \frac{1}{p}\sum _{0\leq s\leq p-1} e^ {i2\pi\frac{(\tau ^{km+1}-a)s}{p}}
		=\left \{
		\begin{array}{ll}
			1 & \text{ if } a \text{ is a $k$th power nonresidues},  \\
			0 & \text{ if } a \text{ is a $k$th power residues}, \\
		\end{array} \right .$ 
	\end{enumerate}
\end{lemma}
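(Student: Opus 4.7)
The plan is to replicate the proof of Lemma~\ref{lem9911Q.200A} verbatim, replacing the subgroup of squares by the subgroup of $k$th power residues in $\F_p^{*}$. Since $k\mid p-1$ and $\tau$ is a primitive root modulo $p$, the $k$th power residues coincide with the cyclic subgroup $H=\langle \tau^k\rangle=\{\tau^{km}:0\leq m<(p-1)/k\}$ of order $(p-1)/k$ and index $k$, and the multiplicative group decomposes into the $k$ disjoint cosets $\tau^j H$ for $0\leq j\leq k-1$.

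For part (i), I would observe that as $m$ runs over $0,1,\ldots,(p-1)/k-1$ the element $\tau^{km}$ traverses $H$ exactly once. Applying the standard additive-character orthogonality
\[
\frac{1}{p}\sum_{0\leq s\leq p-1} e^{i2\pi cs/p}=\begin{cases}1 & \text{if } c\equiv 0\pmod{p},\\ 0 & \text{otherwise,}\end{cases}
\]
with $c=\tau^{km}-a$ collapses the inner sum to $1$ exactly when $a\equiv\tau^{km}\pmod{p}$ and to $0$ otherwise. Summing over $m$ then yields $1$ if $a\in H$ (one and only one $m$ contributes) and $0$ if $a\notin H$, matching the claimed value of $\overline{\varkappa}_k(a)$.

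For part (ii), the identical orthogonality step applies with $\tau^{km+1}=\tau\cdot\tau^{km}$ in place of $\tau^{km}$; the double sum equals $1$ precisely when $a$ lies in the coset $\tau H$ and equals $0$ otherwise. The step I would examine most carefully is the identification of this single coset with the full set of $k$th power nonresidues: for $k=2$ one does have $\tau H=\F_p^{*}\setminus H$, but for $k\geq 3$ the nonresidues split into $k-1$ cosets $\tau H,\tau^2 H,\ldots,\tau^{k-1}H$, so $\varkappa_k$ as written detects only one of them. The computational content of the proof is therefore minimal; the delicate point is conceptual, and the honest options in later sections will be either to read the statement as referring to this particular coset, or to replace $\varkappa_k$ by a sum $\sum_{j=1}^{k-1}\varkappa_k^{(j)}$ of $k-1$ analogous functions using $\tau^{km+j}$ in place of $\tau^{km+1}$.
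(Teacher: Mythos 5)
Your argument is correct and, computationally, it is the same as the paper's: orthogonality of the additive characters $e^{i2\pi cs/p}$ reduces each part to deciding whether $a$ lies in the image of $m\mapsto\tau^{km}$ (part (i)) or of $m\mapsto\tau^{km+1}$ (part (ii)). The genuine difference lies in the step you flag at the end, and your skepticism is warranted: the paper's own proof of (ii) simply asserts that $\tau^{km+1}$ ``ranges over the $k$th power nonresidues modulo $p$,'' which is true only for $k=2$. For $k\geq3$ the image of $m\mapsto\tau^{km+1}$ is the single coset $\tau H$ of the index-$k$ subgroup $H=\langle\tau^k\rangle$ of $k$th power residues, whereas the nonresidues form the union of the $k-1$ cosets $\tau H,\tau^2H,\ldots,\tau^{k-1}H$; hence $\varkappa_k(a)=0$ for every nonresidue $a\in\tau^jH$ with $2\leq j\leq k-1$, so part (ii) as printed is false for $k\geq3$, a defect of the lemma rather than of your proof. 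Your two proposed repairs are the natural ones: reading $\varkappa_k$ as the indicator of the coset $\tau H$ costs nothing for the existence claim in Theorem~\ref{thm9982KN.100} (an element of $\tau H$ is in particular a $k$th power nonresidue, and the density $1/k$ of that single coset is exactly what the paper's main term produces), while summing the $k-1$ analogous functions built from $\tau^{km+j}$, $1\leq j\leq k-1$, is what one must do if the indicator of the full set of nonresidues is wanted, at the price of changing the main-term constant from $1/k$ to $(k-1)/k$.
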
	
\begin{proof}[\textbf{Proof}] (ii) As the index \(m\geq 0\) ranges over the integers up to \(p/k\), the element \(\tau ^{km+1}\in \mathbb{F}_p\) ranges over the $k$th power nonresidues modulo $p$. Thus, the equation 
	\begin{equation}
		\tau ^{km+1}- a=0
	\end{equation} has a unique solution $m\in[0,p/k]$ if and only if the fixed element \(a\in \mathbb{F}_p\) is a $k$th power nonresidue. In this case the inner sum in
\begin{equation}\label{eq9911K.200c}
	\sum _{0\leq m<p/k} \frac{1}{p}\sum _{0\leq s\leq p-1} e^ {i2\pi\frac{(\tau ^{km+1}-a)s}{p}}
	=\left \{
	\begin{array}{ll}
		1 & \text{ if } a \text{ is a $k$th power nonresidues},  \\
		0 & \text{ if } a \text{ is a $k$th power residues}, \\
	\end{array} \right .
\end{equation}	
collapses to $\sum _{0\leq s\leq p-1}1=p$. This in turn identifies $a$ as a $k$-power nonresidue. Otherwise, the element \(a\in \mathbb{F}_p\) is not a $k$th power nonresidue. Thus, the equation \eqref{eq9911K.200c} has no solution and the inner sum collapses to $\sum _{0\leq s\leq p-1}e^ {i2\pi\frac{(\tau ^{km+1}-a)s}{p}}=0$. Otherwise, element \(a\in \mathbb{F}_p\) is not a quadratic nonresidue. This in turn identifies $a$ as a $k$th power residue.
\end{proof} 
\section{Exponential Sums Results } \label{S9933P-ES}\hypertarget{S9933P-ES}
The exponential sums of interest in this analysis are presented in this section. 

An asymptotic relation for the exponential sums, which is a finite Fourier transform, occurring in the error term of the main result is provided here. 

\begin{thm}  \label{thm9933Q.110}\hypertarget{thm9933Q.110} {\normalfont (\cite{ML1972}) }  Let \(p\geq 2\) be a large prime, and let \(\tau \in \mathbb{F}_p\) be an element of large multiplicative order $\ord_p(\tau) \mid p-1$. Then, for any $b \in [1, p-1]$,  and $x\leq p-1$,
	\begin{equation}\label{eq9933Q.110d}
		\sum_{ n \leq x}  e^{i2\pi b \tau^{n}/p} \ll p^{1/2}  (\log p)^2.
	\end{equation}
\end{thm}

\begin{thm}   \label{thm9933ERP.220U}\hypertarget{thm9933ERP.220U}  Let \(p\geq 2\) be a large prime. If $\tau $ be a primitive root modulo $p$ and $a=o(p)$ is not a quadratic nonresidue, then
	\begin{equation} 
		\widehat{U(a)}=	\sum_{1\leq b\leq  p-1}	 e^{ \frac{-i2\pi ab}{p}}	\sum_{1\leq n<p/2} e^{\frac{i2\pi b \tau^{2n+1}}{p}} =-  \sum_{1\leq n< p/2} e^{\frac{i2\pi  \tau^{2n+1}}{p}} + O(p^{1/2} (\log p)^2)\nonumber,
	\end{equation} 
	where the implied constant is independent of $a, b \in [1, p-1]$. 	
\end{thm}  
\begin{proof}[\textbf{Proof}]The complete proof appears in {\color{red}\cite[Theorem 4.5]{CN2021}}. 
\end{proof}

\begin{cor}   \label{cor9933ERP.230U}\hypertarget{cor9933ERP.230U}  Let \(p\geq 2\) be a large prime. If $\tau $ be a primitive root modulo $p$ and $0\ne a=o(p)$ is not a quadratic nonresidue, then
	\begin{equation} 
		\Bigg|\widehat{U(a)}\Bigg|=	\Bigg|\sum_{1\leq b\leq  p-1}	 e^{-i2\pi \frac{ab}{p}}	\sum_{1\leq n< p/2} e^{\frac{i2\pi b \tau^{2n+1}}{p}}\Bigg| = O(p^{1/2} (\log p)^2)\nonumber,
	\end{equation} 
	where the implied constant is independent of $a$ and $ b \in [1, p-1]$. 	
\end{cor} 
\begin{proof}[\textbf{Proof}] The second line in the estimation of the upper bound in \eqref{eq9933ERP.230d} follows from \hyperlink{thm9933ERP.220U}{Theorem} \ref{thm9933ERP.220U} and the fourth line follows from \hyperlink{thm9933Q.110}{Theorem} \ref{thm9933Q.110}: 
	\begin{eqnarray} \label{eq9933ERP.230d}
		\bigg|\widehat {U(a)}\bigg |&=& 	\Bigg|\sum_{1\leq b\leq  p-1}	 e^{\frac{-i2\pi ab}{p}}	\sum_{1\leq n< p/2} e^{\frac{i2\pi b \tau^{2n+1}}{p}}\Bigg| \\[.3cm]
		&=&\left |- \sum_{1\leq n< p/2} e^{ \frac{i2\pi\tau ^{2n+1}}{p}}+ O(p^{1/2} (\log p)^2 )  \right | \nonumber\\[.3cm]
		&\ll &\left |\sum_{1\leq n<p/2} e^{ \frac{i2\pi\tau ^{2n+1}}{p}}\right |+p^{1/2} (\log p)^2  \nonumber\\[.4cm]
		&\ll&  p^{1/2} (\log p)^2\nonumber,
	\end{eqnarray}
	where the implied constant is independent of $a\ne0$. 
\end{proof}

\section{Fibers and Multiplicities for Quadratic Residues} \label{S9925FMK}\hypertarget{S9925FMK}
The multiplicities of the fibers occurring in the estimate of the error term are computed in this section.
\begin{lemma}  \label{lem9900Q.300S}\hypertarget{lem9900Q.300S} Let $p$ be a prime, let $ x=o(p)$ and let $\tau\in \F_p$ be a primitive root in the finite field $\F_p$.  Define the maps
	\begin{equation}\label{eq9900Q.300-m}
		\alpha(s,n)\equiv (\tau ^{2s+1}-n)\bmod p\quad \text{ and } \quad 
		\beta(u,v)\equiv uv\bmod p.
	\end{equation}	
	Then, the fibers $\alpha^{-1}(m)$ and $\beta^{-1}(m)$ of an element $0\ne m\in \F_p$
	have the cardinalities 
	\begin{equation}\label{eq9900Q.300-f}
		\#	\alpha^{-1}(m)\leq x-1\quad \text{ and }\quad \#\beta^{-1}(m)=	x
	\end{equation}
	respectively.
\end{lemma}
\begin{proof}[\textbf{Proof}] Let $\mathscr{S}=\{s<p^{1-\varepsilon}\}$. Given a fixed $n\in [2,x]$, the map 
	\begin{equation}\label{eq9900Q.300-m1}
		\alpha:\mathscr{S}\times [2,x]\longrightarrow\F_p\quad  \text{ defined by }\quad  \alpha(s,n)\equiv (\tau ^{2s+1}-n)\bmod p,
	\end{equation}
	is one-to-one. This follows from the fact that the map $s\longrightarrow\tau^s \bmod p$ is a permutation the nonzero elements of the finite field $\F_p$, and the map $(s,n)\longrightarrow(\tau ^{2s+1}-n)\bmod p$ is a shifted permutation of the subset of quadratic nonresidues 
	\begin{equation}\label{eq9900Q.300-p}
		\mathscr{N}=\{\tau ^{2s+1}:s<p^{1-\varepsilon}\}\subset \F_p,
	\end{equation}
	see {\color{red}\cite[Chapter 7]{LN1997}} for more extensive details on the theory of permutation functions of finite fields. Thus, as $(s,n)\in \mathscr{S}\times [2,x] $ varies, each value $m=\alpha(s,n)$ is repeated at most $x-1$ times. Moreover, the premises no quadratic nonresidues $n\leq x=o(p)$ implies that $m=\alpha(s,n)\ne0$. This verifies that the cardinality of the fiber is
	\begin{eqnarray}\label{eq9900Q.300-f1}
		\#	\alpha^{-1}(m)&=&	\#\{(s,n)\in \mathscr{S}\times [2,x] :m\equiv (\tau ^{2s+1}-n)\bmod p\}\nonumber\\[.3cm]
		&\leq& x-1.
	\end{eqnarray}		
	Similarly, given a fixed $u\in [1,x]$, the map 
	\begin{equation}\label{eq9900Q.300-m2}
		\beta:[1,x]\times [1,p-1]\longrightarrow\F_p\quad  \text{ defined by }\quad  \beta(u,v)\equiv uv\bmod p,
	\end{equation}
	is one-to-one. Here the map $v\longrightarrow uv \bmod p$ permutes the elements of the finite field $\F_p$. Thus, as $(u,v)\in [1,x]\times [1,p-1]$ varies, each value $m=\beta(u,v)$ is repeated exactly $x$ times. This verifies that the cardinality of the fiber is
	\begin{eqnarray}\label{eq9900Q.300-f2}
		\#	\beta^{-1}(m)&=&	\#\{(u,v)\in [1,x]\times [1,p-1]:m\equiv uv\bmod p\}\nonumber\\[.2cm]
		&=&x.
	\end{eqnarray}
	
	Now each value $m=\alpha(s,n)\ne0$ (of multiplicity up to $(x-1)$ in $	\alpha^{-1}(m)$), is matched to $m=\alpha(s,n)=\beta(u,v)$ for some $(u,v)$, (of multiplicity exactly $x$ in $	\beta^{-1}(m)$). Comparing \eqref{eq9900Q.300-f1} and \eqref{eq9900Q.300-f2} prove that $\# \alpha^{-1}(m)\leq\# \beta^{-1}(m)$. 
\end{proof}

\section{Evaluation of the Main Term}\label{S9925M}\hypertarget{S9925M}
The main term is evaluated in this section.
\begin{lemma} \label{lem9925M.300T}\hypertarget{lem9925M.300T} Let $\varepsilon>0$ be a small real number. Let $p\geq 2$ be a large prime, let $ x=o(p)$ and let $q=(\log\log p)$. If $1 \leq a <q$ is a pair of relatively prime integers, then 
	\begin{equation}
		\sum _{\substack{2 \leq n\leq x,\\n\equiv a\bmod q}}  \sum _{0\leq s<p/2} \frac{\Lambda(n)}{p}
		= \frac{x}{2\varphi(q)}+O\left( xe^{-c\sqrt{\log x}}\right),\nonumber
	\end{equation}
	where $c>0$ is a constant.
\end{lemma}

\begin{proof}[\textbf{Proof}] The number of relatively integers $n<p$ coincides with the values of the totient function. A routine rearrangement gives 
	\begin{eqnarray}\label{eq9955N.300f}
		\sum _{\substack{2 \leq n\leq x,\\n\equiv a\bmod q}}  \sum _{0\leq s<p/2} \frac{\Lambda(n)}{p} 
		&=&\frac{1}{p}\sum _{0\leq s<p/2,}\sum_{\substack{2 \leq n\leq x\\
				n \equiv a \bmod q		}} \Lambda(n)\\[.3cm]
		&=& \left( \frac{1}{2} +o(1)\right)  \left(  \frac{x}{\varphi(q)}+O\left( xe^{-c\sqrt{\log x}}\right)\right) \nonumber\\[.3cm]
		&=& \frac{x}{2\varphi(q)}+O\left( xe^{-c\sqrt{\log x}}\right),\nonumber
	\end{eqnarray}
	where $c>0$ is a constant.	Here the last line follows from  {\color{red}\cite[Corollary 5.29]{IK2004}}, {\color{red}\cite[Corollary 11.19]{MV2007}}, et cetera. 
\end{proof}

\section{Estimate For The Error Term} \label{S9925E}\hypertarget{S9925E}
A nontrivial upper bound for the error term $E_2(x,q,a)$ is computed in this section. The error term is partitioned as 
\begin{equation}
	E_2(x,q,a)=E_{0}(x,q,a)+E_{1}(x,q,a).
\end{equation} 
The upper bound of the first term $E_0(x,q,a)$ for $n< p/x$ is derived using a geometric summation/sine approximation techniques, and the upper bound of the second term $E_1(x,q,a)$ for $p/x\leq n\leq p/2$ is derived using exponential sums techniques.
\begin{lemma}  \label{lem9925E.300E}\hypertarget{lem9925E.300E} Let $p\geq 2$ be a large prime, let $ x\ll p^{1/2}(\log p)^{-2}$ and let $q=(\log\log p)$. If there is no quadratic nonresidue \(n\leq x\), then 
	\begin{equation}\label{eq9925E.300b}
		\sum _{\substack{2 \leq n\leq x,\\n\equiv a\bmod q}}  \sum _{0\leq s<p/2} \frac{\Lambda(n)}{p}\sum _{1\leq t\leq p-1} e^ {i2\pi\frac{(\tau ^{2s+1}-n)t}{p}} \ll (\log  p)(\log x)^2.\nonumber 
	\end{equation} 
\end{lemma}

\begin{proof}[\textbf{Proof}] The product of a point $(u,v)\in [1,x]\times [1,p/x)$ satisfies $uv<p$. This leads to the partition $[1,p/x)\cup[p/x,p/2)$ of the index $n$, which is suitable for the sine approximation $uv/p\ll\sin(\pi uv/p)\ll uv/p$ for $|uv/p|<1$ on the first subinterval $[1,p/x)$, see \eqref{eq9900P.300u1} for more details. Thus, consider the partition of the triple finite sum
	\begin{eqnarray} \label{eq9900P.300k}
		E_2(x,q,a)&=& \sum _{\substack{2 \leq n\leq x,\\n\equiv a\bmod q}}
		\frac{\Lambda(n)}{p}\sum_{s< p/2,} \sum_{ 1\leq t\leq p-1} e^{i2\pi \frac{(\tau ^{2s+1}-n)t}{p}}   \\
		&= & \sum _{\substack{2 \leq n\leq x,\\n\equiv a\bmod q}}
		\frac{\Lambda(n)}{p}\sum_{s< p/x,} \sum_{ 1\leq t\leq p-1} e^{i2\pi \frac{(\tau ^{2s+1}-n)t}{p}} \nonumber\\[.12cm]
		&&\hskip 1 in + \sum _{\substack{2 \leq n\leq x,\\n\equiv a\bmod q}}
		\frac{\Lambda(n)}{p}\sum_{p/x\leq s< p/2,} \sum_{ 1\leq t\leq p-1} e^{i2\pi \frac{(\tau ^{2s+1}-n)t}{p}} \nonumber\\[.2cm]
		&=&E_{0}(x,q,a)\;+\;E_{1}(x,q,a) \nonumber.
	\end{eqnarray} 
	Summing yields
	\begin{eqnarray} \label{eq9900P.300u4}
		E_2(x,q,a)&=& E_{0}(x,q,a)\;+\;E_{1}(x,q,a)   \\[.2cm]
		&\ll&  (\log x)^2(\log p)\;+\;  \frac{x(\log p)^2}{p^{1/2}}\nonumber\\[.12cm]
		&\ll& (\log x)^2(\log p)\nonumber,
		\end{eqnarray}
		for $x\ll p^{1/2}(\log p)^{-2}$.
	This completes the estimate of the error term.
\end{proof}

\begin{lemma}   \label{lem9900P.700}\hypertarget{lem9900P.700}  Let $p\geq 2$ be a large prime, let $ x=o(p)$ and let $q=(\log\log p)$. If $\tau $ be a primitive root modulo $p$ and there is no quadratic nonresidue \(n\leq x\), then
	\begin{equation} 
		E_{0}(x,q,a) = \sum _{\substack{2 \leq n\leq x,\\n\equiv a\bmod q}}
		\frac{\Lambda(n)}{p}\sum_{s< p/x,} \sum_{ 1\leq t\leq p-1} e^{i2\pi \frac{(\tau ^{2s+1}-n)t}{p}}= O((\log x)^2(\log p))\nonumber.
	\end{equation} 
\end{lemma} 
\begin{proof}[\textbf{Proof}]	To apply the geometric summation/sine function techniques, the subsum $E_0(x)$ is partition as follows.
	\begin{eqnarray} \label{eq9900P.300l}
		E_0(x,q,a)&=& \sum _{\substack{2 \leq n\leq x,\\n\equiv a\bmod q}}
		\frac{\Lambda(n)}{p}\sum_{s< p/x,} \sum_{ 1\leq t\leq p-1} e^{i2\pi \frac{(\tau ^{2s+1}-n)t}{p}}   \\
		&= & \sum _{\substack{2 \leq n\leq x,\\n\equiv a\bmod q}}
		\frac{\Lambda(n)}{p}\sum_{s< p/x} \left( \sum_{ 1\leq t< p/2} e^{i2\pi \frac{(\tau ^{2s+1}-n)t}{p}}+ \sum_{ p/2\leq t\leq p-1} e^{i2\pi \frac{(\tau ^{2s+1}-n)t}{p}}\right) \nonumber\\[.12cm]
		&=&E_{0,0}(x)\;+\;E_{0,1}(x) \nonumber.
	\end{eqnarray} 
	
	Now, a geometric series summation of the inner finite sum in the first term yields
	\begin{eqnarray} \label{eq9900P.300m}
		E_{0,0}(x)&=& \sum _{\substack{2 \leq n\leq x,\\n\equiv a\bmod q}}
		\frac{\Lambda(n)}{p}\sum_{s< p/x,}  \sum_{ 1\leq t< p/2} e^{i2\pi \frac{(\tau ^{2s+1}-n)t}{p}}  \\[.3cm]
		&=&   	 \sum _{\substack{2 \leq n\leq x,\\n\equiv a\bmod q}}\frac{\Lambda(n)}{p}\sum_{s<p/x}   \frac{e^{i2\pi (\frac{\tau ^{2s+1}-n}{p})(\frac{p}{2}+1)}-1}{1-e^{i2\pi \frac{(\tau ^{2s+1}-n)}{p}}} \nonumber\\[.3cm]
		&\leq&   	\frac{\log x}{p} \sum _{2 \leq n\leq x,}\sum_{s< p/x}   \frac{2}{|\sin\pi(\tau ^{2s+1}-n)/p|} \nonumber,
	\end{eqnarray} 
	see {\color{red}\cite[Chapter 23]{DH2000}} for similar geometric series calculation and estimation. The last line in \eqref{eq9900P.300m} follows from the hypothesis that $u$ is not a primitive root. Specifically, $0\ne\tau^{2s+1}-n\in \F_p$ for any $s < p/2$ and any $n\leq x$. Utilizing \hyperlink{lem9900Q.300S}{Lemma} \ref{lem9900Q.300S}, the first term has the upper bound
	\begin{eqnarray} \label{eq9900P.300u1}
		E_{0,0}(x)&=& \sum _{\substack{2 \leq n\leq x,\\n\equiv a\bmod q}}\frac{\Lambda(n)}{p}\sum_{s<p/x}   \frac{2}{|\sin\pi(\tau ^{2s+1}-n)/p|}\\	[.3cm]
		&\ll&  	\frac{2\log x}{p} \sum_{1\leq u\leq x,}\sum_{1\leq v< p/x}   \frac{1}{|\sin\pi uv/p|}\nonumber\\	[.3cm]
		&\ll&  	\frac{2\log x}{p} \sum_{1\leq u\leq x,}\sum_{1\leq v< p}   \frac{p}{\pi uv} \nonumber\\	[.3cm]
		&\ll& (\log x)	\sum_{1\leq u\leq x}\frac{1}{u}\sum_{1\leq v< p}   \frac{1}{v} \nonumber\\	[.3cm]
		&\ll& (\log x)^2(\log p)\nonumber,
	\end{eqnarray}
	where $uv<p$ and $|\sin\pi uv/p|\ne0$ since $p\nmid uv$. Similarly, the second term has the upper bound
	\begin{eqnarray} \label{eq9900P.300v}
		E_{0,1}(x)&=& \sum _{\substack{2 \leq n\leq x,\\n\equiv a\bmod q}} 
		\frac{\Lambda(n)}{p}\sum_{s<p/x,}  \sum_{ p/2\leq t\leq p-1} e^{i2\pi \frac{(\tau ^{2s+1}-n)t}{p}}  \\[.3cm]
		&=&   \sum _{\substack{2 \leq n\leq x,\\n\equiv a\bmod q}} 	\frac{\Lambda(n)}{p}\sum_{s<p/x}   \frac{1-e^{i2\pi (\frac{\tau ^s-n}{p})(\frac{p+1}{2})}}{1-e^{i2\pi \frac{(\tau ^s-n)}{p}}} \nonumber\\[.3cm]
		&\leq&   	\frac{\log x}{p} \sum_{2\leq n\leq x,}\sum_{s<p/x}  \frac{2}{|\sin\pi(\tau ^s-n)/p|} \nonumber\\[.3cm]
		&\ll&  (\log x)^2(\log p)\nonumber.
	\end{eqnarray}
	This is computed in the way as done in \eqref{eq9900P.300m} to \eqref{eq9900P.300u1}, mutatis mutandis. Hence, \\		
	\begin{equation}
		E_{0}(x)	=E_{0,0}(x)\;+\;E_{0,1}(x)\ll  (\log x)^2(\log p).
	\end{equation}
	
\end{proof}
\begin{lemma}   \label{lem9900P.750}\hypertarget{lem9900P.750}  Let \(p\geq 2\) be a large primes. If $\tau $ be a primitive root modulo $p$ and there is no quadratic nonresidue \(n\leq x\), then
	\begin{equation} 
		E_{1}(x,q,a) = 	\sum _{\substack{2 \leq n\leq x,\\n\equiv a\bmod q}}
		\frac{\Lambda(n)}{p}\sum_{p/x\leq s< p/2,} \sum_{ 1\leq t\leq p-1} e^{i2\pi \frac{(\tau ^{2s+1}-n)t}{p}}= O\left( \frac{(\log p)^2}{p^{1/2}}	\cdot x\log x\right) \nonumber.
	\end{equation} 
\end{lemma} 
\begin{proof}[\textbf{Proof}] 	Rewrite the inner sum to obtain this:
	\begin{eqnarray} \label{eq9900P.750i}
		E_{1}(x,q,a)
		&=&  \sum _{\substack{2 \leq n\leq x,\\n\equiv a\bmod q}} 	\frac{\Lambda(n)}{p}\sum _{1\leq t\leq p-1}e^ {\frac{-i2\pi nt}{p}}\sum_{p/x\leq s< p/2}  e^ {i2 \pi \frac{\tau ^{2s+1}t}{p}}   \\[.2cm]
		&=&  \sum _{\substack{2 \leq n\leq x,\\n\equiv a\bmod q}}	\frac{\Lambda(n)}{p} \sum _{1\leq t\leq p-1}e^ {\frac{-i2\pi nt}{p}}\left( \sum_{1\leq s< p/2}  e^ {i2 \pi \frac{\tau ^{2s+1}}{p}}-\sum_{1\leq s<p/x}  e^ {i2 \pi \frac{\tau ^{2s+1}}{p}}\right)  \nonumber .
	\end{eqnarray}
	Now, use the estimate $\Lambda(n)\leq \log x$ for $n\leq x$, take absolute value and apply the triangle inequality:
	\begin{eqnarray} \label{eq9900P.750j}
		E_{1}(x,q,a)
		&\leq&   \frac{\log x}{p} \sum _{\substack{2 \leq n\leq x,\\n\equiv a\bmod q}}2\left|\sum _{1\leq t\leq p-1}e^ {\frac{-i2\pi nt}{p}}\sum_{1\leq s< p/2}   e^ {i2 \pi \frac{\tau ^{2s}}{p}}\right|\\[.3cm]
		&\ll&   \frac{\log x}{p} \sum _{2 \leq n\leq p/x} p^{1/2}(\log p)^2   \nonumber\\[.3cm]
		&\ll& \frac{(\log p)^2}{p^{1/2}}	\cdot x\log x\nonumber.
		\end{eqnarray}
The second line in \eqref{eq9900P.750j} follows from \hyperlink{cor9933ERP.230U}{Corollary} \ref{cor9933ERP.230U}. 
\end{proof}
		
	\section{Small Prime Quadratic Nonresidues in Arithmetic Progression}\label{S9925Q}\hypertarget{S9925Q}
	Define the counting function
	\begin{equation} \label{eq9925Q.400d}
		N_2(x,q,a)	=\#\left\{r\leq x: \left( \frac{r}{p}\right)=-1 \text{ and }r\equiv a\bmod q \text{ is prime}\right\}.
	\end{equation}	 
	The determination of an upper bound for the smallest quadratic nonresidue in arithmetic modulo $p$ is based on a new characteristic function for quadratic nonresidue in finite field $\F_p$ developed in \hyperlink{S9911Q}{Section} \ref{S9911Q} and the weighted characteristic function of prime numbers, (better known as the vonMangoldt function),
	\begin{equation} \label{eq9955P.400i}
		\Lambda(n)=
		\begin{cases}
			\log p&\text{ if } n=p^k,\\
			0&\text{ if } n\ne p^k,
		\end{cases}
	\end{equation}
	where $p^k$ is a prime power. The counting function \eqref{eq9925Q.400d} is easily derived from the weighted counting function \eqref{eq9925Q.400h}. 
	
	\begin{proof}[{\color{blue}\normalfont\textbf{Proof of \hyperlink{thm9982QN.100}{Theorem} \ref{thm9982QN.100}}}] Let \(p\) be a large prime number, let $x=(\log p)(\log\log p)^{3+\varepsilon}$, where $\varepsilon>0$ is a small number. Suppose the least quadratic nonresidue $n>x$ and consider the sum of the characteristic function over the short interval \([2,x]\), that is, 
		\begin{equation} \label{eq9925Q.400h}
			N_2(x,q,a)	=\sum _{\substack{2 \leq n\leq x\\n\equiv a\bmod q}}\varkappa(n) \Lambda(n)=0.
		\end{equation}
		Replacing the characteristic function for quadratic nonresidues, \hyperlink{lem9911Q.200A}{Lemma} \ref{lem9911Q.200A}, and expanding the nonexistence equation \eqref{eq9925Q.400h} yield
		\begin{eqnarray} \label{eq9925Q.400m}
			N_2(x,q,a)&=&\sum _{\substack{2 \leq n\leq x\\n\equiv a\bmod q}} \varkappa (n)\Lambda(n)  \\
			&=&\sum _{\substack{2 \leq n\leq x\\n\equiv a\bmod q}} \Lambda(n) \sum _{0\leq s<p/2} \frac{1}{p}\sum _{0\leq t\leq p-1} e^ {i2\pi\frac{(\tau ^{2s+1}-n)t}{p}} \nonumber\\[.3cm] 
			&=&\sum _{\substack{2 \leq n\leq x,\\n\equiv a\bmod q}}  \sum _{0\leq s<p/2} \frac{\Lambda(n)}{p}  +\sum _{\substack{2 \leq n\leq x,\\n\equiv a\bmod q}}  \sum _{0\leq s<p/2} \frac{\Lambda(n)}{p}\sum _{1\leq t\leq p-1} e^ {i2\pi\frac{(\tau ^{2s+1}-n)t}{p}}\nonumber\\[.3cm] 
			&=&M_2(x,q,a)\; +\; E_2(x,q,a)\nonumber.
		\end{eqnarray} 
		
		The main term $M_2(x,q,a)$ is determined by $s=0$, which reduces the exponential to \(e^{i 2\pi  ks/p}=1\), it is evaluated in \hyperlink{lem9925M.300T}{Lemma} \ref{lem9925M.300T}. The error term $E_2(x,q,a)$ is determined by $s\ne0$, which retains  the exponential to \(e^{i 2\pi  ks/p}\ne1\), it is estimated in \hyperlink{lem9925E.300E}{Lemma} \ref{lem9925E.300E}. \\
		
		Substituting these evaluation, estimate and value $x=(\log p)(\log\log p)^{3+\varepsilon}$ yield
		\begin{eqnarray} \label{eq9925Q.400p}
			N_2(x,q,a)&=&\sum _{\substack{2 \leq n\leq x\\n\equiv a\bmod q}} \varkappa (n)\Lambda(n)		\\[.3cm]	
			&=&M(x,q,a) + E(x,q,a) \nonumber\\[.3cm]
			&=&\left[ \frac{x}{2\varphi(q)}+O\left( xe^{-c\sqrt{\log x}}\right)\right] +\left[ O\left((\log p)(\log x)^2\right) \right] \nonumber\\[.3cm]
			&=&\left[ \frac{(\log p)(\log\log p)^{3+\varepsilon}}{2\varphi(q)}+O\left( (\log p)(\log\log p)^{3+\varepsilon}e^{-c\sqrt{\log\log p}}\right)\right] \nonumber\\[.3cm]
			&&\hskip 2.5 in +\left[ O\left((\log p)(\log \log p)^2\right) \right] \nonumber\\[.3cm]
			&=& \frac{(\log p)(\log\log p)^{3+\varepsilon}}{2\varphi(q)}+O\left((\log p)(\log \log p)^2\right)  \nonumber,
		\end{eqnarray} 
		where $c>0$ is a constant. For constraint $q\ll\log\log p$, the totient function $\varphi(q)$ is bounded away from zero, that is,
		\begin{equation}\label{eq9925Q.400s}
			0<\frac{1}{\log \log p}\ll\frac{1}{\varphi(q)},
		\end{equation}	
		see {\color{red}\cite[Theorem 2.9]{MV2007}}. Consequently, for small parameter $q=O(\log \log p)$ the main term in \eqref{eq9925Q.400p} dominates the error term:
		
		\begin{eqnarray} \label{eq9925Q.400v}
			N_2(x,q,a)&=&\sum _{\substack{2 \leq n\leq x\\n\equiv a\bmod q}} \varkappa (n)\Lambda(n)		\\[.3cm]	
			&=&\frac{(\log p)(\log\log p)^{3+\varepsilon}}{2\varphi(q)}+O\left((\log p)(\log \log p)^2\right)\nonumber\\[.3cm]
			&\gg&  (\log p)(\log\log p)^{2+\varepsilon}\left( 1+O\left(\frac{1}{(\log\log p)^{\varepsilon}}\right) \right) \nonumber\\[.3cm]
			&>&0\nonumber 
		\end{eqnarray} 
		as $p\to\infty$. Clearly, this contradicts the hypothesis \eqref{eq9925Q.400h} for all sufficiently large prime numbers $p \geq p_0$. Therefore, there exists a prime quadratic nonresidue in the arithmetic progression 
		\begin{equation}
			n=qm+a\leq (\log p)(\log\log p)^{3+\varepsilon},	
		\end{equation}
		modulo $p$.
	\end{proof}

\section{Numerical Data}\label{EXA9955L}\hypertarget{EXA9955L}
The data for a small prime are computed here to illustrate the concept.

\subsection{Numerical Data for Prime Quadratic Residues}
\begin{exa}\label{exa9925K.100A}{\normalfont For the prime closest to $10^{24}$, the parameters are these: 
		\begin{enumerate}[font=\normalfont, label=(\alph*)]
			
			\item $\displaystyle p=10^{24}+7,$ \tabto{8cm}prime,\\
			
			\item $\displaystyle x=(\log p)(\log\log p)^{3}=3568.93,$\tabto{8cm}range with $\varepsilon=0$,\\
			
			\item $\displaystyle q\leq\log\log p=4.0,$\tabto{8cm}modulo,\\
			
			\item $\displaystyle R_2\gg\frac{(\log p)(\log\log p)^{2}}{2\varphi(q)}=892.23,   $\tabto{8cm}predicted number $N_2$ in \eqref{eq9925Q.400v}.
			
		\end{enumerate}
The estimate in $R=R(x,q,a)$ above is a weighted count and the unweighted number of prime quadratic residues is $R_0=R_0(x,q,a)$, see the definitions of the counting functions in \eqref{eq9925K.400d} and \eqref{eq9925K.400h}. Thus, the predicted number of small prime quadratic residues is
\begin{equation}\label{eq9925K.400dh}
	R_0(x,q,a)=\delta_r(q,a)\cdot  \frac{1}{\varphi(q)}\cdot (\log p)(\log\log p)^{2}+Error \approx  222.39+Error,
\end{equation} 
where $\delta_r(q,a)=c_r(q,a)/2$ is the density of prime quadratic residues and $c_r(q,a)\geq0$ is a correction factor, depending on $q=4$ and $a=1,3$. The first 10 small prime quadratic residues in each arithmetic progression modulo $q=4$ are these:
		\begin{enumerate}
			\item $\begin{aligned}[t] \mathscr{R}_1&=\{n=4m+1\leq x:\left( \frac{n}{p}\right)=1\}\\&=\{29,61,73,89,109,137,151,181,197,271\ldots\},\end{aligned}$
			
			\item $\begin{aligned}[t] \mathscr{R}_3&=\{n=4m+3\leq x:\left( \frac{n}{p}\right)=1\}\\&=\{3,19,23,43,47,71,79,83,87,103,\ldots\}.\end{aligned}$
		\end{enumerate}

	}
\end{exa}

\subsection{Numerical Data for Prime Quadratic Nonresidues}
\begin{exa}\label{exa9925N.200A}{\normalfont For the prime closest to $10^{24}$, the parameters are these: 
		\begin{enumerate}[font=\normalfont, label=(\alph*)]
	
	\item $\displaystyle p=10^{24}+7,$ \tabto{8cm}prime,\\
	
	\item $\displaystyle x=(\log p)(\log\log p)^{3}=3568.93,$\tabto{8cm}range with $\varepsilon=0$,\\
	
	\item $\displaystyle q\leq\log\log p=4.0,$\tabto{8cm}modulo,\\
	
	\item $\displaystyle N_2\gg\frac{(\log p)(\log\log p)^{2}}{2\varphi(q)}=892.23,   $\tabto{8cm}predicted number $N_2$ in \eqref{eq9925Q.400v}.
	
\end{enumerate}
		\vskip .1 in
The estimate in $N_2=N_2(x,q,a)$ above is a weighted count and the unweighted number of prime quadratic nonresidues is $\overline{N_2}=\overline{N_2}(x,q,a)$, see the definitions of the counting functions in \eqref{eq9925Q.400d} and \eqref{eq9925Q.400h}. Thus, the predicted number of small quadratic nonresidues is 
\begin{equation}\label{eq9925Q.400di}
\overline{	N_2}(x,q,a)=\frac{N_2(x,q,a)}{\log \log p}=  \frac{c(q,a)}{2\varphi(q)}\cdot (\log p)(\log\log p)^{2}+Error,
\end{equation}
where $c(q,a)\geq0$ is a correction factor, depending on $q=4$ and $a=1,3$.  The first 10 small prime quadratic nonresidues in each arithmetic progression modulo $q=4$ are these:
		\begin{enumerate}
			\item $\begin{aligned}[t]
				\mathscr{N}_1&=\{n=4m+1\leq x:\left( \frac{n}{p}\right)=-1\}\\&=\{5,13,17,37,53,97,101,113,173,229,\ldots\},
			\end{aligned}$
			
			\item $\begin{aligned}[t] \mathscr{N}_3&=\{n=4m+3\leq x:\left( \frac{n}{p}\right)=-1\}\\&=\{59,67,107,139,167,179,211,223,227,239,\ldots\}.	\end{aligned}$
		\end{enumerate}	
	}
\end{exa}

\section{Small Prime $k$th Power Nonresidues in Arithmetic Progressions}\label{S9925K}\hypertarget{S9925K}
Define the counting function
\begin{equation} \label{eq9925K.400d}
	N_k(x,q,a)	=\#\left\{n\leq x: n^{(p-1)/k}\equiv-1\bmod p \text{ and }n\equiv a\bmod q \text{ is prime}\right\}.
\end{equation}	 
The determination of an upper bound for the smallest $k$th power nonresidue  modulo $p$ in arithmetic modulo $q$ is based on a new characteristic function for quadratic nonresidue in finite field $\F_p$ developed in \hyperlink{S9911Q}{Section} \ref{S9911Q} and the weighted characteristic function of prime numbers, (better known as the vonMagoldt function),
\begin{equation} \label{eq9955K.400i}
	\Lambda(n)=
	\begin{cases}
		\log p&\text{ if } n=p^m,\\
		0&\text{ if } n\ne p^m,
	\end{cases}
\end{equation}
where $p^m$ is a prime power. The counting function \eqref{eq9925K.400d} is easily derived from the weighted counting function \eqref{eq9925K.400h}. 

\begin{proof}[{\color{blue}\normalfont\textbf{Proof of \hyperlink{thm9982KR.100}{Theorem} \ref{thm9982KR.100}}}] Let $p$ be a large prime number, let $x=(\log p)(\log\log p)^{4+\varepsilon}$, where $\varepsilon>0$ is a small number and let $k<x$ be a small integer such that $k\mid p-1$. Suppose the least $k$th power nonresidue $n>x$ and consider the sum of the characteristic function over the short interval $[2,x]$, that is, 
	\begin{equation} \label{eq9925K.400h}
		\overline{N_p}(x,q,a)=\sum _{\substack{2 \leq n\leq x\\n\equiv a\bmod q}}\varkappa_k(n) \Lambda(n)=0.
	\end{equation}
	Replacing the characteristic function for quadratic nonresidues, \hyperlink{lem9911K.200A}{Lemma} \ref{lem9911K.200A}, and expanding the nonexistence equation \eqref{eq9925K.400h} yield
	\begin{eqnarray} \label{eq9925K.400m}
		\overline{N_k}(x,q,a)&=&\sum _{\substack{2 \leq n\leq x\\n\equiv a\bmod q}} \varkappa_k (n)\Lambda(n)  \\
		&=&\sum _{\substack{2 \leq n\leq x\\n\equiv a\bmod q}} \Lambda(n) \sum _{0\leq s<p/k} \frac{1}{p}\sum _{0\leq t\leq p-1} e^ {i2\pi\frac{(\tau ^{ks+1}-n)t}{p}} \nonumber\\[.3cm] 
		&=&\sum _{\substack{2 \leq n\leq x,\\n\equiv a\bmod q}}  \sum _{0\leq s<p/k} \frac{\Lambda(n)}{p}  +\sum _{\substack{2 \leq n\leq x,\\n\equiv a\bmod q}}  \sum _{0\leq s<p/k} \frac{\Lambda(n)}{p}\sum _{1\leq t\leq p-1} e^ {i2\pi\frac{(\tau ^{ks+1}-n)t}{p}}\nonumber\\[.3cm] 
		&=&M_k(x,q,a)\; +\; E_k(x,q,a)\nonumber.
	\end{eqnarray} 
	
	The main term $M_k(x,q,a)$ is determined by $t=0$, which reduces the exponential to \(e^{i 2\pi  ks/p}=1\), it is evaluated in \hyperlink{lem9925KM.300M}{Lemma} \ref{lem9925KM.300M}. The error term $E_k(x,q,a)$ is determined by $t\ne0$, which retains  the exponential to \(e^{i 2\pi  ks/p}\ne1\), it is estimated in \hyperlink{lem9925KE.300E}{Lemma} \ref{lem9925KE.300E}. \\
	
Substituting these evaluation, estimate and the value $x$ yield
	\begin{eqnarray} \label{eq9925K.400p}
		\overline{N_k}(x,q,a)&=&\sum _{\substack{2 \leq n\leq x\\n\equiv a\bmod q}} \varkappa_k (n)\Lambda(n)		\\[.3cm]	
		&=&M_k(x,q,a) + E_k(x,q,a) \nonumber\\[.3cm]
		&=&\left[ \frac{1}{k}\cdot \frac{x}{\varphi(q)}+O\left( xe^{-c\sqrt{\log x}}\right)\right] +\left[ O\left((\log p)(\log x)^2\right) \right] \nonumber\\[.3cm]
		&=&\left[\frac{1}{k}\cdot \frac{(\log p)(\log\log p)^{4+\varepsilon}}{\varphi(q)}+O\left( (\log p)(\log\log p)^{4+\varepsilon}\cdot e^{-c_1\sqrt{\log\log p}}\right)\right] \nonumber\\[.3cm]
		&&\hskip 2.78 in +\left[ O\left((\log p)(\log \log p)^2\right) \right] \nonumber,
	\end{eqnarray} 
	where $c, c_1>0$ are constants. For any integer $q\ll\log\log p$, the ratio is bounded away from zero, that is,
	\begin{equation}\label{eq9925K.400s}
		\frac{1}{\varphi(q)}\gg\frac{1}{\log \log p}>0 \nonumber,
	\end{equation}	
since $\varphi(q)\gg q/\log \log q$. Consequently, for small parameters $k\ll\log \log p$ and $q\ll\log \log p$ the main term in \eqref{eq9925K.400p} dominates the error term:
	
	\begin{eqnarray} \label{eq9925K.400v}
		N_k(x,q,a)&=&\sum _{\substack{2 \leq n\leq x\\n\equiv a\bmod q}} \varkappa_k (n)\Lambda(n)		\\[.3cm]	
		&=&\frac{1}{k}\cdot \frac{(\log p)(\log\log p)^{4+\varepsilon}}{\varphi(q)}+O\left((\log p)(\log \log p)^2\right)\nonumber\\[.3cm]
		&\gg& (\log p)(\log\log p)^{2+\varepsilon}+O\left((\log p)(\log \log p)^2\right) \nonumber\\[.3cm]
		&\gg& (\log p)(\log\log p)^{2+\varepsilon}\left( 1+O\left(\frac{1}{(\log\log p)^{\varepsilon}}\right) \right) \nonumber\\[.3cm]
		&>&0\nonumber
	\end{eqnarray} 
as $p\to\infty$. Clearly, this contradicts the hypothesis \eqref{eq9925K.400h} for all sufficiently large prime numbers $p \geq p_0$. Therefore, there exists a prime $k$th power nonresidue in the arithmetic progression
\begin{equation}
	 n=qm+a\leq (\log p)(\log\log p)^{4+\varepsilon}
\end{equation}
as claimed.
\end{proof}

\subsection{Evaluation of the Main Term for $k$th Powers}\label{S9925KM}\hypertarget{S9925KM}
The main term is evaluated in this section.
\begin{lem} \label{lem9925KM.300M}\hypertarget{lem9925KM.300M} Let $\varepsilon>0$ be a small real number. Let \(p\geq 2\) be a large prime, let $ x=o( p)$ and let $k, q\ll (\log\log p)$ be a pair of integers. If $1 \leq a <q$ is a pair of relatively prime integers and $k\mid p-1$ then 
	\begin{equation}
		\sum _{\substack{2 \leq n\leq x,\\n\equiv a\bmod q}}  \sum _{0\leq m<p/k} \frac{\Lambda(n)}{p}
		=\frac{1}{k}\cdot \left( \frac{x}{\varphi(q)}+O\left( xe^{-c\sqrt{\log x}}\right)\right) ,\nonumber
	\end{equation}
	where $c>0$ is a constant.
\end{lem}

\begin{proof}[\textbf{Proof}] A routine rearrangement gives 
	\begin{eqnarray}\label{eq9955KM.300f}
		\sum _{\substack{2 \leq n\leq x,\\n\equiv a\bmod q}}  \sum _{0\leq m<p/k} \frac{\Lambda(n)}{p}
		&=&\frac{1}{p}\sum _{0\leq m<p/k,}\sum_{\substack{2 \leq n\leq x\\
				n \equiv a \bmod q		}} \Lambda(n)\\[.3cm]
		&=& \left( \frac{1}{k} +o(1)\right)  \left(   \frac{x}{\varphi(q)}+O\left( xe^{-c\sqrt{\log x}}\right)\right) \nonumber\\[.3cm]
		&=& \frac{1}{k}\cdot \left( \frac{x}{\varphi(q)}+O\left( xe^{-c\sqrt{\log x}}\right)\right) ,\nonumber
	\end{eqnarray}
	where $c>0$ is a constant.	Here the last second follows from the prime number theorem over arithmetic progressions, {\color{red}\cite[Corollary 5.29.]{IK2004}}, {\color{red}\cite[Corollary 11.19]{MV2007}}, et cetera. 
\end{proof}

\subsection{Estimate For The Error Term for $k$th Powers} \label{S9925KE}\hypertarget{S9925KE}
A nontrivial upper bound for the error term $E_k(x,q,a)$ is computed in this section. The error term is partitioned as 
\begin{equation}
	E_k(x,q,a)=E_{k,0}(x,q,a)+E_{k,1}(x,q,a).
\end{equation} 
The upper bound of the first term $E_{k,0}(x,q,a)$ for $s< p/x$ is derived using a geometric summation/sine approximation techniques, and the upper bound of the second term $E_{k,1}(x,q,a)$ for $p/x\leq s\leq p/k$ is derived using exponential sums techniques.
\begin{lem}  \label{lem9925KE.300E}\hypertarget{lem9925KE.300E} Let $p\geq 2$ be a large prime, let $ x\ll p^{1/2}(\log p)^{-2}$ and let $q=(\log\log p)$. If $k<x$ and there is no quadratic nonresidue \(n\leq x\), then 
	\begin{equation}\label{eq9925KP.300b}
		\sum _{\substack{2 \leq n\leq x,\\n\equiv a\bmod q}}  \sum _{0\leq s<p/k} \frac{\Lambda(n)}{p}\sum _{1\leq t\leq p-1} e^ {i2\pi\frac{(\tau ^{ks+1}-n)t}{p}} \ll (\log  p)(\log x)^2.\nonumber 
	\end{equation} 
\end{lem}

\begin{proof}[\textbf{Proof}] The product of a point $(u,v)\in [1,x]\times [1,p/x)$ satisfies $uv<p$. This leads to the partition $[1,p/x)\cup[p/x,p/k)$ of the index $n$, which is suitable for the sine approximation $uv/p\ll\sin(\pi uv/p)\ll uv/p$ for $|uv/p|<1$ on the first subinterval $[1,p/x)$. Thus, consider the partition of the triple finite sum
	\begin{eqnarray} \label{eq9900KP.300k}
		E_k(x,q,a)&=& \sum _{\substack{2 \leq n\leq x,\\n\equiv a\bmod q}}
		\frac{\Lambda(n)}{p}\sum_{s< p/k,} \sum_{ 1\leq t\leq p-1} e^{i2\pi \frac{(\tau ^{2s+1}-n)t}{p}}   \\[.2cm]
		&= & \sum _{\substack{2 \leq n\leq x,\\n\equiv a\bmod q}}
		\frac{\Lambda(n)}{p}\sum_{s< p/x,} \sum_{ 1\leq t\leq p-1} e^{i2\pi \frac{(\tau ^{2s+1}-n)t}{p}} \nonumber\\[.2cm]
		&&\hskip 1.25 in+ \sum _{\substack{2 \leq n\leq x,\\n\equiv a\bmod q}}
		\frac{\Lambda(n)}{p}\sum_{p/x\leq s< p/k,} \sum_{ 1\leq t\leq p-1} e^{i2\pi \frac{(\tau ^{2s+1}-n)t}{p}} \nonumber\\[.3cm]
		&=&E_{k,0}(x,q,a)\;+\;E_{k,1}(x,q,a) \nonumber.
	\end{eqnarray} 
	Summing yields
	\begin{eqnarray} \label{eq9900KP.300u4}
		E_k(x,q,a)&=& E_{k,0}(x,q,a)\;+\;E_{k,1}(x,q,a)   \\
		&\ll&  (\log x)^2(\log p)\;+\;  \frac{x(\log p)^2}{p^{1/2}}\nonumber\\[.12cm]
		&\ll& (\log x)^2(\log p)\nonumber,
	\end{eqnarray}
for $x\ll p^{1/2}(\log p)^{-2}$.	This completes the estimate of the error term.
\end{proof}

\begin{lem}  \label{lem9925KE.300E1}\hypertarget{lem9925KE.300E1} Suppose \(p\) is a large prime and $x=o(p)$. If there is no $k$th power nonresidue $n\leq x$, then 
	\begin{equation}\label{eq9925KE.300b}
		\left |\sum _{\substack{2 \leq n\leq x,\\n\equiv a\bmod q}}  \sum _{0\leq s<p/k} \frac{\Lambda(n)}{p}\sum _{1\leq t\leq p-1} e^ {i2\pi\frac{(\tau ^{ks+1}-n)t}{p}} \right |\ll (\log  p)(\log x)^2.\nonumber 
	\end{equation} 
\end{lem}
\begin{proof}[\textbf{Proof}] To compute an upper bound consider the symmetric partition of the triple finite sum
	\begin{eqnarray} \label{eq9925KE.300k}
		E_{k,0}(x,q,a)&=& \sum _{\substack{2 \leq n\leq x,\\n\equiv a\bmod q}}  \sum _{0\leq s<p/x} \frac{\Lambda(n)}{p}\sum _{1\leq t\leq p-1} e^ {i2\pi\frac{(\tau ^{ks+1}-n)t}{p}}  \\
		&= & \sum _{\substack{2 \leq n\leq x,\\n\equiv a\bmod q}}  \sum _{0\leq s<p/x} \frac{\Lambda(n)}{p} \left( \sum_{ 0<t\leq p/2} e^ {i2\pi\frac{(\tau ^{ks+1}-n)t}{p}}+ \sum_{ p/2<t\leq p-1} e^ {i2\pi\frac{(\tau ^{ks+1}-n)t}{p}}\right) \nonumber\\[.12cm]
		&=&T_{1}(x,q,a)\;+\;T_{2}(x,q,a) \nonumber.
	\end{eqnarray} 
	A geometric series summation of the inner finite sum in the first term yields
	\begin{eqnarray} \label{eq9925KE.300m}
		T{1}(x,q,a)&=& \sum _{\substack{2 \leq n\leq x,\\n\equiv a\bmod q}}  \sum _{0\leq s<p/x} \frac{\Lambda(n)}{p} \sum_{ 0<t\leq p/2} e^ {i2\pi\frac{(\tau ^{ks+1}-n)t}{p}} \\[.3cm]
		&=&   	\frac{1}{p} \sum _{\substack{2 \leq n\leq x,\\n\equiv a\bmod q}}  \sum _{0\leq s<p/x} \Lambda(n) \frac{e^{i2\pi (\frac{\tau ^{ks+1}-n}{p})(\frac{p+1}{2})}-1}{e^{i2\pi \frac{(\tau ^{ks+1}-n)}{p}}-1} \nonumber\\[.3cm]
		&\leq&   	\frac{\log x}{p} \sum _{2 \leq n\leq x,}  \sum _{0\leq s<p/x}   \frac{2}{|\sin\pi(\tau ^{ks+1}-n)/p|} \nonumber,
	\end{eqnarray} 
	see {\color{red}\cite[Chapter 23]{DH2000}} for similar geometric series calculation and estimation. The last line in \eqref{eq9925KE.300m} follows from the hypothesis that $n$ is not a $k$th power nonresidue. Specifically, $0\ne\tau^{km+1}-n\in \F_p$ for any $m <p/k$ and any $n\leq x$. \\
	
	Now apply \hyperlink{lem9900KE.300K}{Lemma} \ref{lem9900KE.300K} to obtain the next inequality
	\begin{eqnarray} \label{eq9925KE.300t}
		T_{1}(x,q,a)&\leq&   \frac{\log x}{p} \sum _{2 \leq n\leq x,}  \sum _{0\leq s<p/x}   \frac{2}{|\sin\pi(\tau ^{ks+1}-n)/p|} \\	[.3cm]
		&\leq&  	\frac{2\log x}{p}  \sum _{1 \leq u\leq x,}  \sum _{1\leq v<p}   \frac{1}{|\sin\pi uv/p|}\nonumber ,
	\end{eqnarray} 
where $|\sin\pi uv/p|\ne0$ since $p\nmid uv$. 	Observe that the index in the first double sum in \eqref{eq9925KE.300t} ranges over $(x-1)$ copies of a set of cardinality $<p/k$, for example $\mathscr{A}=\{1,2,3,\ldots, (p-1)/x\}$. In contrast, the index in the last double sum in \eqref{eq9925KE.300t} ranges over $x$ copies of the set $[1,p-1]$. Thus, \hyperlink{lem9900KE.300K}{Lemma} \ref{lem9900KE.300K} fully justifies the inequality \eqref{eq9925KE.300t}. In light of this information, continuing the calculation of the estimate as in {\color{red}\cite[p.\;136]{DH2000}}, yields 
	\begin{eqnarray} \label{eq9925KE.300u}
		T_{1}(x,q,a)
		&\ll&  	\frac{2\log x}{p} \sum_{1\leq u\leq x,}\sum_{1\leq v< p}   \frac{p}{\pi uv} \\	[.3cm]
		&\ll& (\log x)	\sum_{1\leq u\leq x}\frac{1}{u}\sum_{1\leq v< p}   \frac{1}{v} \nonumber\\	[.3cm]
		&\ll& (\log x)^2(\log p)\nonumber,
	\end{eqnarray}
	where $|\sin\pi uv/p|\ne0$ since $p\nmid uv$. Similarly, the second term has the upper bound
	\begin{eqnarray} \label{eq9925KE.300v}
		T_{2}(x,q,a)&=& \sum _{\substack{2 \leq n\leq x,\\n\equiv a\bmod q}}  \sum _{0\leq s<p/x} \frac{\Lambda(n)}{p} \sum_{ p/2<s\leq p} e^ {i2\pi\frac{(\tau ^{km+1}-n)s}{p}} \\[.3cm]
		&=&    \frac{1}{p}\sum _{\substack{2 \leq n\leq x\\n\equiv a\bmod q}} \Lambda(n) \sum _{0\leq s<p/x} \frac{1-e^{i2\pi (\frac{\tau ^{ks+1}-n}{p})(\frac{p+1}{2})}}{e^{i2\pi \frac{(\tau ^{ks+1}-n)}{p}}-1} \nonumber\\[.3cm]
		&\leq&   	\frac{(\log x)}{p} \sum_{2\leq n\leq x,} \sum _{0\leq s<p/x}  \frac{2}{|\sin\pi(\tau ^{ks+1}-n)/p|} \nonumber\\[.3cm]
		&\ll&  (\log x)^2(\log p)\nonumber.
	\end{eqnarray}
	Adding \eqref{eq9925KE.300v} and \eqref{eq9925KE.300v} yield
	\begin{eqnarray} \label{eq9900KE.300x}
		E_{k,0}(x,q,a)&=& T_{1}(x,q,a)\;+\;T_{2}(x,q,a,)  \\[.3cm]
		&\ll&   (\log p)(\log  x)^2+(\log p)(\log x)^2 \nonumber\\[.3cm]
		&\ll& (\log p)(\log x)^2\nonumber.
	\end{eqnarray}
	This completes the verification.
\end{proof}

\begin{lemma}   \label{lem9900KP.750}\hypertarget{lem9900KP.750}  Let \(p\geq 2\) be a large primes, let $x=o(p)$ and let $k<x$. If $\tau $ be a primitive root modulo $p$ and there is no quadratic nonresidue \(n\leq x\), then
	\begin{equation} 
		E_{k,1}(x,q,a) = 	\sum _{\substack{2 \leq n\leq x,\\n\equiv a\bmod q}}
		\frac{\Lambda(n)}{p}\sum_{p/x\leq s< p/k,} \sum_{ 1\leq t\leq p-1} e^{ \frac{(i2\pi\tau ^{ks+1}-n)t}{p}}= O\left( \frac{(\log p)^2}{p^{1/2}}	\cdot x\log x\right) \nonumber.
	\end{equation} 
\end{lemma} 
\begin{proof}[\textbf{Proof}] 	Rewrite the inner sum to obtain this:
	\begin{eqnarray} \label{eq9900KP.750i}
		E_{k,1}(x,q,a)
		&=&  \sum _{\substack{2 \leq n\leq x,\\n\equiv a\bmod q}} 	\frac{\Lambda(n)}{p}\sum _{1\leq t\leq p-1}e^ {\frac{-i2\pi nt}{p}}\sum_{p/x\leq s< p/k}  e^ { \frac{i2 \pi\tau ^{ks+1}t}{p}}   \\[.2cm]
		&=&  \sum _{\substack{2 \leq n\leq x,\\n\equiv a\bmod q}}	\frac{\Lambda(n)}{p} \sum _{1\leq t\leq p-1}e^ {\frac{-i2\pi nt}{p}}\left( \sum_{1\leq s< p/k}  e^ { \frac{i2 \pi\tau ^{ks+1}}{p}}-\sum_{1\leq s<p/x}  e^ { \frac{i2 \pi\tau ^{ks+1}}{p}}\right)  \nonumber .
	\end{eqnarray}
	Now, use the estimate $\Lambda(n)\leq \log x$ for $p\leq x$, take absolute value and apply the triangle inequality:
	\begin{eqnarray} \label{eq9900KP.750j}
		E_{k,1}(x,q,a)
		&\leq&   \frac{\log x}{p} \sum _{\substack{2 \leq n\leq x,\\n\equiv a\bmod q}}2\left|\sum _{1\leq t\leq p-1}e^ {\frac{-i2\pi nt}{p}}\sum_{1\leq s< p/2}   e^ {i2 \pi \frac{\tau ^{2s}}{p}}\right|\\[.3cm]
		&\ll&   \frac{\log x}{p} \sum _{2 \leq n\leq p/x} p^{1/2}(\log p)^2   \nonumber\\[.3cm]
		&\ll& \frac{(\log p)^2}{p^{1/2}}	\cdot x\log x\nonumber.
	\end{eqnarray}
	The second line in \eqref{eq9900KP.750j} follows from \hyperlink{cor9933ERP.230U}{Corollary} \ref{cor9933ERP.230U}. 
\end{proof}

\subsection{Permutations and Fibers} 

\begin{lem}  \label{lem9900KE.300K}\hypertarget{lem9900KE.300K} Let $p\geq 2$ be a prime, let $k\mid p-1$, let $ x=o(p)$ and let $\tau\in \F_p$ be a primitive root in the finite field $\F_p$.  Define the maps
	\begin{equation}\label{eq9900KE.300-m}
		\alpha(m,n)\equiv (\tau ^{km+1}-n)\bmod p\quad \text{ and } \quad 
		\beta(u,v)\equiv uv\bmod p.
	\end{equation}	
	Then, the fibers $\alpha^{-1}(r)$ and $\beta^{-1}(r)$ of an element $0\ne r\in \F_p$
	have the cardinalities 
	\begin{equation}\label{eq9900KE.300-f}
		\#	\alpha^{-1}(r)\leq x-1\quad \text{ and }\quad \#\beta^{-1}(r)=	x
	\end{equation}
	respectively.
\end{lem}
\begin{proof}[\textbf{Proof}]Let $\mathscr{M}=\{m<p/k\}$. Given a fixed $n\in [2,x]$, the map 
	\begin{equation}\label{eq9900KE.300-m1}						  		
		\alpha:\mathscr{M}\times [2,x] \longrightarrow\F_p\quad  \text{ defined by }\quad  \alpha(m,n)\equiv (\tau ^{km+1}-n)\bmod p,
	\end{equation}
	is one-to-one. This follows from the fact that the map $s\longrightarrow\tau^s \bmod p$ is a permutation the nonzero elements of the finite field $\F_p$, and the restriction map $m\longrightarrow(\tau ^{km+1}-n)\bmod p$ is a shifted permutation of the subset of $k$th power nonresidues 
	\begin{equation}\label{eq9900KE.300-p}
		\mathscr{N}_k=\{\tau ^{km+1}:m\geq0\}\subset \F_p,
	\end{equation}
	see {\color{red}\cite[Chapter 7]{LN1997}} for more details on the theory of permutations of finite fields. Thus, as $(m,n)\in \mathscr{M}\times [2,x]$ varies, each value $r=\alpha(m,n)$ is repeated at most $x-1$ times. Moreover, the premises no $k$th power nonresidues $n\leq x=(\log p)^{1+\varepsilon}$ implies that $r=\alpha(m,n)\ne0$. This verifies that the cardinality of the fiber
	\begin{eqnarray}\label{eq9900KE.300-f1}
		\#	\alpha^{-1}(r)&=&	\#\{(m,n):r\equiv (\tau ^{km+1}-n)\bmod p:2\leq n\leq x \text{ and } m<p/k\}\nonumber\\
		&\leq &x-1.
	\end{eqnarray}		
	Similarly, given a fixed $u\in [1,x]$, the map 
	\begin{equation}\label{eq9900KE.300-m2}
		\beta:[1,x]\times [1,p-1]\longrightarrow\F_p\quad  \text{ defined by }\quad  \beta(u,v)\equiv uv\bmod p,
	\end{equation}
	is one-to-one. Here the map $v\longrightarrow uv \bmod p$ permutes the nonzero elements of the finite field $\F_p$. Thus, as $(u,v)\in [1,x]\times[1,p-1]$ varies, each value $m=\beta(u,v)$ is repeated exactly $x$ times. This verifies that the cardinality of the fiber
	\begin{equation}\label{eq9900KE.300-f2}
		\#	\beta^{-1}(r)=	\#\{(u,v):r\equiv uv\bmod p:1\leq u\leq x \text{ and }1\leq v< p\}=x
	\end{equation}
	
	Now each value $r=\alpha(m,n)\ne0$ (of multiplicity at most $(x-1)$ in $	\alpha^{-1}(r)$), is matched to $r=\alpha(m,n)=\beta(u,v)$ for some $(u,v)$, (of multiplicity exactly $x$ in $	\beta^{-1}(r)$). Comparing \eqref{eq9900KE.300-f1} and \eqref{eq9900KE.300-f2} proves that $\# \alpha^{-1}(m)\leq\# \beta^{-1}(m)$.
\end{proof}

\section{Numerical Data for $k$th Power Nonresidues}\label{EXA9955K}\hypertarget{EXA9955K}
The data for a small prime are computed here to illustrate the concept.

\subsection{Numerical Data for Prime $3$th Power Nonresidues}
The $3$th power nonresidue symbol is defined by
\begin{equation}
	\left( \frac{n}{p}\right)_3\equiv n^{(p-1)/3}\not\equiv 1\bmod p.
\end{equation}
Some numerical estimates for the cubic nonresidues are made below. 
\begin{exa}\label{exa9925KN.100B}{\normalfont For the prime closest to $2^{128}$, the parameters are these: 
		\begin{enumerate}[font=\normalfont, label=(\alph*)]
			
			\item $\displaystyle p=2^{128}+51,$ \tabto{8cm}a random 39-digit prime,\\
			\item $\displaystyle k=3\mid p-1,$ \tabto{8cm}third power nonresidues,\\
			\item $\displaystyle x=(\log p)(\log p)^{4}=35915.80,$\tabto{8cm}range with $\varepsilon=0$,\\
			
			\item $\displaystyle q\leq(\log\log p)^2=20.12,$\tabto{8cm}range of moduli,\\
			
			\item $\displaystyle \overline{N_3}\gg\frac{(\log p)(\log p)^{4}}{k\varphi(q)}=2992.98,$\tabto{8cm}predicted number $\overline{N_3}$ in \eqref{eq9925K.400v}.
			
		\end{enumerate}
		The estimate in $\overline{N_3}=\overline{N_3}(x,q,a)$ above is a weighted count and the unweighted number of prime quadratic residues is $N_3=N_3(x,q,a)$, see the definitions of the counting functions in \eqref{eq9925K.400d} and \eqref{eq9925K.400h}. Thus, the predicted number of small quadratic residues in each residue class $1,3,5,7 \bmod 8$ is
		\begin{eqnarray}\label{eq9925K.400dh-3}
			N_3(x,q,a)&=&c(k,q,a)\frac{1}{k}\cdot  \frac{1}{\varphi(q)}\cdot \frac{(\log p)^{1+\varepsilon}}{ \log (\log p)^{1+\varepsilon}}+Error\nonumber\\
			&=& 667c(k,q,a)+Error,
		\end{eqnarray} 
		where $c(k,q,a)\geq0$ is a correction factor, depending on $k=3$, $q=8$ and $a=1,3,5,7$. The actual number of small $3$th power nonresidues in each arithmetic progression modulo $q=8$ are these, both composite and primes are listed for $n\leq x=1000$:
		\begin{enumerate}
			\item $\begin{aligned}[t] \mathscr{N}_1&=\{n=8m+1\leq x:\left( \frac{n}{p}\right)_3\ne1\}\\&=\{161,{\color{red}193},{\color{red}233},{\color{red}241},{\color{red}409},{\color{red}617},825,993\ldots\},\end{aligned}$
			
			\item $\begin{aligned}[t] \mathscr{N}_3&=\{n=8m+3\leq x:\left( \frac{n}{p}\right)_3\ne1\}\\&=\{27,35,{\color{red}59},{\color{red}163},171,203,507,{\color{red}563},635,{\color{red}643},{\color{red}659},675,\ldots\}.\end{aligned}$
			
			\item $\begin{aligned}[t] \mathscr{N}_5&=\{n=8m+5\leq x:\left( \frac{n}{p}\right)_3\ne1\}\\&=\{{\color{red}29},117,245,{\color{red}269},413,517,669,741,{\color{red}821},{\color{red}877},941,989,\ldots\},\end{aligned}$
			
			\item $\begin{aligned}[t] \mathscr{N}_7&=\{n=8m+7\leq x:\left( \frac{n}{p}\right)_3\ne1\}\\&=\{{\color{red}23},215,303,335,423,{\color{red}463},591,671,711,{\color{red}727},775,879,{\color{red}919},999,\ldots\}.\end{aligned}$
			
		\end{enumerate}

		The $3$th power nonresidues are elements of multiplicative order $$\ord_p u=(p-1)/3=113427455640312821154458202477256070502$$	in $\F_p$, where 
		$p=2^{128}+51$.
	}
\end{exa}

\subsection{Numerical Data for Prime $7$th Power Nonresidues}
The $7$th power nonresidue symbol is defined by
\begin{equation}
	\left( \frac{n}{p}\right)_7\equiv n^{(p-1)/7}\not\equiv 1\bmod p.
\end{equation}
\begin{exa}\label{exa9925KN.100A}{\normalfont For the prime closest to $10^{48}+200$, the parameters are these: 
		\begin{enumerate}[font=\normalfont, label=(\alph*)]
			
			\item $\displaystyle p=10^{48}+217,$ \tabto{8cm}a 48-digit random prime,\\
			\item $\displaystyle k=7\mid p-1,$ \tabto{8cm}seventh power nonresidues,\\
			\item $\displaystyle x=(\log p)(\log p)^{4}=54172.84,$\tabto{8cm}range with $\varepsilon=0$,\\
			
			\item $\displaystyle q\leq(\log\log p)^2=22.14,$\tabto{8cm}range of moduli,\\
			
			\item $\displaystyle \overline{N_7}\gg\frac{(\log p)(\log p)^{4}}{k\varphi(q)}=3869.49,$\tabto{8cm}predicted number $\overline{N_7}$ in \eqref{eq9925K.400v}.
			
		\end{enumerate}
		The estimate in $\overline{N_7}=\overline{N_7}(x,q,a)$ above is a weighted count and the unweighted number of prime quadratic residues is $N_7=N_7(x,q,a)$, see the definitions of the counting functions in \eqref{eq9925K.400d} and \eqref{eq9925K.400h}. Thus, the predicted number of small quadratic residues is
		\begin{eqnarray}\label{eq9925K.400dh-7}
			N_7(x,q,a)&=&c(k,q,a)\frac{1}{k}\cdot  \frac{1}{\varphi(q)}\cdot \frac{(\log p)(\log \log p)^{4}}{ \log (\log p)(\log \log p)^{4}}+Error\nonumber\\
			&=& 822c(k,q,a)+Error,
		\end{eqnarray} 
where $c(k,q,a)\geq0$ is a correction factor, depending on $k=7$, $q=3$ and $a=1,2$. The actual number of small $7$th power nonresidues in each arithmetic progression modulo $q=3$ are these, both composite and primes $n\leq x=300$ are listed here:
		\begin{enumerate}
			\item $\begin{aligned}[t] \mathscr{N}_1&=\{n=3m+1\leq x:\left( \frac{n}{p}\right)_7\ne1\}\\&=\{{\color{red}19},{\color{red}61},136,{\color{red}139},247,{\color{red}283},322,\ldots\},\end{aligned}$
			
			\item $\begin{aligned}[t] \mathscr{N}_3&=\{n=3m+2\leq x:\left( \frac{n}{p}\right)_7\ne1\}\\&=\{35,80,{\color{red}83},158,{\color{red}173},209,{\color{red}281},\ldots\}.\end{aligned}$
		\end{enumerate}	
		
The $7$th power nonresidues are elements of multiplicative order
$$\ord_p u=(p-1)/7=142857142857142857142857142857142857142857142888$$	in $\F_p$, where 
$p=10^{48}+217.$

	}
\end{exa}

\section{Research Problems}\label{S8895RP}
\subsection{Correction Constants for $k$th Power Residues and Nonresidues}
The correction factor arises as a correction to the density, it accounts for the statistical dependency among certain variables. The earliest observation of this phenomenon seems to the discrepancy observed in the primes counting function for primes with a fixed primitive roots, a historical account of its development appears in \cite{SP2003}. This is topic is known as \textit{entanglement}, some materials on the advanced theory of entanglement groups appears in \cite{LM2014}, \cite{PT2022}, et alii. Likely, the densities for primes $k$th power residues and nonresidues are given by exclusion-inclusion infinite sums such as
\begin{equation}\label{S8895RP.300k}
\delta(k,q,a)\stackrel{?}{=}	\sum_{n\geq1}\frac{\mu(n)}{[\mathcal{K}_n:\Q]},
\end{equation}   
where $\mathcal{K}_n$ is a sequence of field extensions of the rational numbers $\Q$. \\

A research problem of interest is to determine the correction $c(k,q,a)$ to the density $\delta(k,q,a)=c(k,q,a)/k$ associated with the prime $k$th power nonresidues counting function
\begin{align}
N_k(x,q,a)&=c(k,q,a)\cdot \frac{1}{k}\cdot  \frac{1}{\varphi(q)}\cdot \frac{(\log p)(\log\log p)^{4+\varepsilon}}{ \log (\log p)(\log\log p)^{4+\varepsilon}}\\[.4cm] 
& \hskip  2in +O\left(    \log p)(\log\log p)^{4+\varepsilon}  e^{-c\sqrt{\log\log p} }\right), \nonumber
\end{align}
where $c>0$ is a constant. The correction factor $c(k,q,a)\geq0$ fine tunes the density $\delta(k,q,a)$ to account for the properties of being a $k$th power nonresidue, being a prime, and being in a specific arithmetic progression.

\subsection{New Patterns of Consecutive Quadratic Residues and Nonresidues}\label{S8895P}\hypertarget{S8895P}
The smallest quadratic nonresidue modulo $p$ is a prime, but not all small quadratic nonresidues modulo $p$ are primes. For example, in $\F_{41}^{\times}$, the subset of quadratic residues and quadratic nonresidues are these respectively.

\begin{enumerate}
	\item $\displaystyle \mathscr{Q}=\{1, 2,4, 5,8,9, 10,16, 18,20,21,23,25,31,32,33, 36,   37,39,40\}$ and \\
	
	\item $\displaystyle \mathscr{N}=\{3,6,7,11,12,13,14,15,17,19,22,24,26,27,28,29,30,34,35,38\}.$
\end{enumerate}	

Some of the question on the binary patterns $RR$, $RN$, $NR$ and $NN$ of consecutive quadratic residues $R$ and nonresidues $N$ have been resolved completely, see {\color{red}\cite[Chapter 6]{BW1998}}. But the restricted patterns of consecutive prime/composite quadratic residues and nonresidues, such as $R_pR_p$, $R_pR_c $, $R_cR_p$, $R_cR_c$, has no literature. These require the evaluations of more complicated finite sums. \\

\addtocontents{toc}{\setcounter{tocdepth}{1}} 
\section{Problems}\label{EXE5577I}


\begin{exe} \label{exe8895.101} {\normalfont Determine the number of consecutive pairs of prime quadratic nonresidues, composite quadratic nonresidues such as $(13,14), (29,30)$ in $\F_{41}$ but in a large finite field $\F_p$, the finite sum has the form
		\begin{equation} \label{S8895P.200f}
			\frac{1}{4}\sum _{2 \leq n\leq x}\left( 1-\left(\frac{n}{p} \right)\right) \left( 1-\left(\frac{n+1}{p} \right)\right)   \Lambda(n)=\sum _{2 \leq n\leq x}\varkappa(n)\varkappa(n+1) \Lambda(n).
		\end{equation}
		Either finite sum is suitable for this problem.
	}
\end{exe}
\vskip .15 in

\begin{exe} \label{exe8895.104} {\normalfont Assume the twin prime conjecture. Determine the density of consecutive prime quadratic nonresidues with respect to the set of twin primes in large finite field $\F_p$. For example evaluate or estimate the counting function 
		\begin{equation} \label{S8895P.200g}
			\frac{1}{4}\sum _{2 \leq n\leq x}\left( 1-\left(\frac{n}{p} \right)\right) \left( 1-\left(\frac{n+2}{p} \right)\right)   \Lambda(n)\Lambda(n+2).
		\end{equation}
		In the finite field $\F_{41}$ the density is 1/2 since there are 2 pairs $11,13$ and $(17,19)$ of twin primes and consecutive quadratic nonresidue among the total number of twin prime pairs $(3,5),(5,7),(11,13),(17,19)$ in the interval $[1,41]$.\\
		
		The same statistics should be true for consecutive quadratic residues.
	}
\end{exe}
\vskip .15 in

\begin{exe} \label{exe8895.107} {\normalfont Prove or disprove whether a pair of any pattern of consecutive quadratic residues are not equidistributed on the interval $[1,p-1]$. Likewise, any pattern of consecutive nonquadratic residues are not equidistributed on the interval $[1,p-1]$.
	}
\end{exe}
\vskip .15 in
\begin{exe} \label{exe8895.101} {\normalfont Define the gap between two pairs of consecutive residues $r,r+1$ and $r+d,r+d+1$ by the difference $gap=r+d-(r+1)=d-1$, where $d\geq2$. Estimate the average gap between consecutive quadratic residues over the interval $[1,p-1]$. Likewise, estimate the average gap between consecutive quadratic nonresidues over the interval $[1,p-1]$.
	}
\end{exe}
\vskip .15 in



\end{document}